\author{%
  Charles Bertucci $^1$, Jean-Michel Lasry $^2$, 
 Pierre-Louis Lions$^{2,3}$
  }
\newtheorem{Theorem}{Theorem}
\newtheorem{Lemma}{Lemma}
\newtheorem{Rem}{Remark}
\newtheorem{Def}{Definition}
\newtheorem{Prop}{Proposition}
\title{Master equation for the finite state space planning problem}
\thanks{$^1$ : CMAP, Ecole Polytechnique, UMR 7641, 91120 Palaiseau, France\\
$^2$ :Universit\'e Paris-Dauphine, PSL Research University,UMR 7534, CEREMADE, 75016 Paris, France\\
$^3$ : Coll\`ege de France, 3 rue d'Ulm, 75005, Paris, France
}
\date{} 
\begin{document}
\maketitle
\begin{abstract}
We present results of existence, regularity and uniqueness of solutions of the master equation associated with the mean field planning problem in the finite state space case, in the presence of a common noise. The results hold under monotonicity assumptions, which are used crucially in the different proofs of the paper. We also make a link with the trajectories induced by the solution of the master equation and start a discussion on the case of boundary conditions.
\end{abstract}
\tableofcontents
\section*{Introduction}
This paper is concerned with the study of the mean field games planning problem in finite state space, in the presence of common noise. The cases of a continuous state space and of generalized optimal transport in the presence of a common noise shall be treated in a future work.

\subsection*{General introduction}
The planning problem is a mathematical formulation of a model in which an infinite number of non-atomic and identical players face a time dependent game in which the final distribution of players is constrained. The latter constraint is of course non-classical and raises different modeling problems. The most serious one being that the final constraint on the distribution of players cannot be impose on those players, which, being non-atomic, cannot affect the distribution. Formally, the planning problem addresses the previous issue as follows : there are strong incentives in the cost functions of the players to induce a behavior which is consistent with the constraint.

The planning problem originates from the work of the last two authors and was first presented in the lectures of the third author at Coll\`ege de France \citep{lions2007cours}. When the noises affecting the players are independent and when the state space is continuous, a system of PDE has been established in \citep{lions2007cours} to characterize the equilibria of the planning problem. The typical form of this system is 
\begin{equation}\label{ppsystem}
\begin{cases}
-\partial_t u - \nu \Delta u + H(x, \nabla u, m) = 0 \text{ in } (0,T) \times \mathbb{R}^d;\\
\partial_t m - \nu \Delta m - \text{div}(D_pH(x, \nabla u, m)m) = 0 \text{ in } (0,T) \times \mathbb{R}^d;\\
m(0) = m_1 ; m(T) = m_2 \text{ in } \mathbb{R}^d.
\end{cases}
\end{equation}
In (\ref{ppsystem}), $u$ denotes the value function of the players, $m$ their density, $H(x,p,z)$ is the Hamiltonian associated to the game and $\nu \geq 0$ is a parameter describing the intensity of the noise. A subclass of the planning problem of the utmost importance is of course the optimal transport problem. The analogy is almost transparent when comparing (\ref{ppsystem}) with the so-called Benamou-Brenier formulation of the optimal transport problem \citep{benamou2000computational}. Let us comment on the fact that from a modeling point of view, the main difference between the mean field planning problem and optimal transport, is that optimal transport is associated to a centralize optimization problem, whereas the mean field planning problem deals with a game in which non-atomic players, or infinitesimal masses, are supposed to have a behavior which is individual.\\

The system (\ref{ppsystem}) has now been quite extensively studied, especially by analogy to his mean field game (MFG for short) counterpart. The point of view being to impose the terminal condition 
\begin{equation}\label{penalcont}
u(T) = \frac{1}{\epsilon}(m(T) - m_2) \text{ in } \mathbb{R}^d
\end{equation}
instead of the condition $m(T) = m_2$, and to let $\epsilon$ goes to $0$. The first results on (\ref{ppsystem}) dates back from \citep{lions2007cours} and concern either the deterministic case $\nu = 0$ or the quadratic one $H(x,p,z) = |p|^2 - f(z)$. Later on, a more general case has been studied in \citep{porretta2014planning} with the notion of weak solutions. Numerical methods for this type of system has been presented in \citep{achdou2012mean}. More recently, the works \citep{graber2019planning,orrieri2019variational} studied (\ref{ppsystem}) in the potential case. In the so-called potential approach, (\ref{ppsystem}) is interpreted as the optimality conditions of an infinite dimensional optimal control problem. As the aim of this paper is to study the case of a common noise, we cannot generalize the approach which consists as looking at a forward-backward system similar to (\ref{ppsystem}), instead we have to study the master equation associated to planning problem.\\
\\

\subsection*{The master equation approach}
In this paper we study a different setting than the one we just mentioned. We place ourselves in the finite state space case in which the study of the master equation is less difficult and less technical than the continuous case, that we shall address in a future work. In this setting, a typical form taken by the master equation with $d$ states in the MFG context is :
\begin{equation}\label{finitemfg}
\begin{cases}
\partial_t U + (F(x,U)\cdot \nabla_x)U + \lambda (U - (DT)^*U(Tx)) = G(x, U) \text{ in } (0,T)\times\mathbb{R}^d;\\
U(0,x) = U_0(x) \text{ in } \mathbb{R}^d;
\end{cases}
\end{equation}
where, to simplify notations time has been reversed, $F$ and $G$ are mappings from $\mathbb{R}^{2d}$ onto $\mathbb{R}^d$ which describes respectively the evolution of the distribution of players and the costs paid by the players, $T$ is a mapping from $\mathbb{R}^d$ into itself which describes the common noise and $\lambda \geq 0$ describes the intensity of the common noise. The solution $U = (U^i)_{1 \leq i \leq d}:(0,T)\times \mathbb{R}^d\to \mathbb{R}^d$ is the value function of the players, meaning that $U^i(t,x)$ is the value for a generic player in state $i$ when the distribution of players in the $d$ states is given by $x\in \mathbb{R}^d$ and there remains a time interval of length $t$ in the game.

In this paper we want to analyse the solution of (\ref{finitemfg}) when the initial condition has been replaced by a suitable penalization, by analogy with (\ref{penalcont}), when this penalization goes to infinity. We shall show that this approach leads to the construction of the value function of the players in the planning problem, solution of a master equation. We expect a very singular behavior for the solution of the planning problem's master equation at $t= 0$ and we show in this paper how to characterize precisely this singularity.

In the classical MFG setting, i.e. when there is no blow-up at the time $t= 0$, the system (\ref{finitemfg}) has been studied. Since its introduction by the last two authors, their results of well-posedness in the case $\lambda = 0$ have been presented in \citep{lions2007cours}. The case of common noise, i.e. $\lambda > 0$, has been treated in \citep{bertucci2019some}. Let us also note that other type of noise have been studied in \citep{bayraktar2019finite}, which contrary to the noise we study here, do not propagate monotonicity but yield some kind of ellipticity for the master equation.

\subsection*{Structure of the paper}
The rest of the paper is structured as follows. In section 1 we prove some key estimates which allow us to pass to the limit in a penalized master equation. In section 2 we show that we can characterize the solution of the master equation associated to the planning problem by using the Yosida's regularization when this master equation is posed on $\mathbb{R}^d$. In section 3 we show that the techniques of section 2 can be adapted to a case in which the master equation is posed on a subdomain of $\mathbb{R}^d$.

\section{Notations and preliminary results}
\subsection{Notations}
We begin by with some notations and definitions.
\begin{itemize}
\item We shall use the notations $|\cdot|$ for the norm of vectors and $\|\cdot\|$ for the norm of operator.
\item The identity of applications is denoted $Id$ and the identity of matrices $I$.
\item An application $A$ from an Hilbert space $(H,\langle , \rangle)$ into itself is $\alpha$-monotone ($\alpha \geq 0$) if for all $x,y \in H$
\[
\langle A(x) - A(y), x-y \rangle \geq \alpha \langle x-y, x-y\rangle.
\]
\item An application $A$ is simply called monotone if the previous inequality holds with $\alpha = 0$.
\item For any element $x$ of an Hilbert space $H$, $A_x$ denotes the multivalued operator whose domain is $\{x\}$ and such that $A(x) = H$.
\item The Yosida approximation $S_{\delta}A$ of parameter $\delta$ of a monotone operator $A$ is defined by 
\[
S_{\delta}A = A\circ (Id + \delta A )^{-1}.
\]
\end{itemize}
\subsection{Preliminary results}
The first result we present in this section is a result of existence of solutions of (\ref{finitemfg}). This result is fairly simple, once some a priori estimates has been established as was already mentioned in \citep{lions2007cours}. However, as the question of existence is central in this paper, we wish to state and prove a specific result for the sake of completeness.
\begin{Prop}\label{existmfg}
Assume that 
\begin{itemize}
\item $F$, $G$ and $U_0$ are lipschitz applications.
\item $(G,F) : \mathbb{R}^{2d} \to \mathbb{R}^{2d}$ and $U_0 : \mathbb{R}^d \to \mathbb{R}^d$ are monotone.
\item either $(G,F)$ is $\alpha$ monotone in its first variable and $U_0$ is $\alpha$ monotone or $(G,F)$ is $\alpha$ monotone in its second variable for some $\alpha > 0$.
\item either $\lambda = 0$ or $T$ is an affine application.
\end{itemize}
Then, there exists a unique function $U$ defined on $\mathbb{R}_+\times \mathbb{R}^d$, solution of (\ref{finitemfg}) which is monotone and lipschitz in the $x$ variable.
\end{Prop}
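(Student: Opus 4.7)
The plan is to view (\ref{finitemfg}) as an evolution equation in the space of Lipschitz maps from $\mathbb{R}^d$ to itself, and to use the monotonicity hypotheses to obtain uniform-in-time a priori estimates on both the Lipschitz constant of $U(t,\cdot)$ and on its monotonicity. Once these bounds are in hand, existence follows from a regularization and contraction-mapping scheme, while uniqueness is obtained by a direct energy-type estimate exploiting the strong monotonicity. Because $U$ takes values in finite dimensions and $F$, $G$ are Lipschitz, the entire analysis can be carried out in $C^{0,1}(\mathbb{R}^d;\mathbb{R}^d)$ rather than in an abstract Hilbertian setting.

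First I would establish the key a priori estimates on sufficiently smooth solutions. Differentiating (\ref{finitemfg}) in $x$ and testing the resulting equation for $\nabla_x U$ against an arbitrary direction $h\in\mathbb{R}^d$ should yield an inequality of the schematic form
\[
\frac{d}{dt}\langle \nabla_x U(t,x)h, h\rangle \geq -C\|\nabla_x U(t,x)\|\,|h|^2 + \alpha|h|^2 - \lambda\bigl(\langle \nabla_x U h,h\rangle - \langle (DT)^* \nabla_x U(Tx)DT\,h,h\rangle\bigr),
\]
in which the $+\alpha|h|^2$ term comes from the strong monotonicity of $(G,F)$ (in either the first or the second variable, depending on which alternative of the hypothesis is used). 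When $T$ is affine the non-local term preserves positive semi-definiteness, so, together with the monotonicity of $U_0$, this propagates the monotonicity of $x\mapsto U(t,x)$ for all $t\geq 0$. A parallel computation, testing against $\nabla_x U(t,x)h$ itself rather than against $h$, controls $\|\nabla_x U(t,\cdot)\|_\infty$ by an ODE with at most quadratic right-hand side and a strictly positive damping coefficient $\alpha$, hence keeps the Lipschitz norm bounded globally in time.

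For existence, I would regularize $F$, $G$, $U_0$ by smooth approximations and solve the corresponding regularized master equation locally in time by Picard iteration: (\ref{finitemfg}) is then an ODE in $C^{0,1}(\mathbb{R}^d;\mathbb{R}^d)$ whose right-hand side is locally Lipschitz in $U$ once a spatial Lipschitz bound is imposed a priori. The global bounds of the previous paragraph extend each approximate solution to $\mathbb{R}_+$ and provide enough compactness to pass to the limit as the regularization parameter tends to zero, producing a monotone Lipschitz solution of (\ref{finitemfg}).

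Uniqueness is obtained by comparing two solutions $U$, $V$ along the flow generated by $F(\cdot,U)$: computing $\frac{d}{dt}|U(t,x)-V(t,x)|^2$ along these characteristics and using the $\alpha$-monotonicity of $(G,F)$ to absorb the cross terms gives a Gronwall-type inequality which, under the common initial data $U_0$, forces $U\equiv V$. The main obstacle I anticipate throughout is the non-local term $\lambda(DT)^* U(T\cdot)$, which breaks any pure characteristic reduction and only cooperates with the Lipschitz, monotonicity and uniqueness estimates when $T$ is affine. Handling this term stably is the technical heart of the proof and explains precisely why the hypothesis rules out general nonlinear $T$ as soon as $\lambda>0$.
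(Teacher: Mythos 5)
Your a priori estimates (propagation of monotonicity of $x\mapsto U(t,x)$ and the resulting Lipschitz bound, both hinging on the $\alpha$-monotonicity and on $T$ being affine) are exactly the ones the paper relies on, and your remark that the nonlocal term only cooperates when $T$ is affine is on target. The genuine gap is in the existence mechanism. Equation (\ref{finitemfg}) is a first-order \emph{quasilinear} system: the map $U\mapsto G(x,U)-(F(x,U)\cdot\nabla_x)U-\lambda(U-(DT)^*U(Tx))$ sends $C^{0,1}(\mathbb{R}^d;\mathbb{R}^d)$ into $L^\infty$ at best, not back into $C^{0,1}$, so it is not a locally Lipschitz self-map of $C^{0,1}$ and the Picard iteration you describe does not close — this is the standard loss-of-derivatives obstruction, and imposing a spatial Lipschitz bound ``a priori'' does not remove it, because the iteration itself cannot reproduce that bound. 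Smoothing the data $F$, $G$, $U_0$ does not help either, since the equation remains first order and quasilinear. What the paper does instead is regularize the \emph{equation}, adding $-\epsilon\Delta U$; for each $\epsilon>0$ it solves the resulting parabolic system by a Leray--Schauder fixed point (freezing $U$ in the coefficients, using parabolic regularity from Ladyzhenskaya et al.\ for continuity and compactness of the solution map, and the uniform gradient estimate for the a priori bound on the fixed-point family). The vanishing-viscosity limit $\epsilon\to 0$ then requires, besides the $\epsilon$-uniform Lipschitz bound in $x$, an equicontinuity estimate in \emph{time}; the paper obtains $|U_\epsilon(t,x)-U_0(x)|\le C_1 t(1+|x|)$ by a barrier/touching-point argument, a step entirely absent from your proposal, before concluding by Ascoli--Arzel\`a.

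On uniqueness, your Gronwall argument along characteristics is workable for Lipschitz solutions (writing $F(x,U)\cdot\nabla_x U - F(x,V)\cdot\nabla_x V = F(x,U)\cdot\nabla_x(U-V)+(F(x,U)-F(x,V))\cdot\nabla_x V$ and using the Lipschitz bound on $V$), and in fact does not need the $\alpha$-monotonicity at all; the paper does not spell this step out, reserving the doubling-of-variables monotonicity argument for the uniqueness of the planning-problem solution later on. So the uniqueness sketch is acceptable, but the existence proof needs to be rebuilt around a regularization of the equation (or an equivalent device supplying compactness), not a contraction argument in $C^{0,1}$.
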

\begin{proof}
Under the standing assumptions, for any $T> 0$, there exists $C > 0$, such that for any $\epsilon > 0$, for any smooth solution $U$ of 
\begin{equation}\label{regfinitemfg}
\begin{cases}
\partial_t U - \epsilon \Delta U + (F(x,U)\cdot \nabla_x)U + \lambda (U - (DT)^*U(Tx)) = G(x, U) \text{ in } (0,T)\times\mathbb{R}^d;\\
U(0,x) = U_0(x) \text{ in } \mathbb{R}^d;
\end{cases}\end{equation}
the following holds
\begin{equation}\label{est1}
\forall t \leq T, \forall x \in \mathbb{R}^d, \|D_x U(t,x)\| \leq C.
\end{equation}
This results is a direct generalization of the estimates established in \citep{lions2007cours,bertucci2019some} which is obtained by remarking that adding the term $-\epsilon \Delta$ leaves the proof of the result unchanged. Let us fix $\epsilon > 0$ and let us define the operator $\mathcal{F}$ from the set of locally lipschitz function into itself by $V = \mathcal{F}(U)$ is the unique solution of
\begin{equation}
\begin{cases}
\partial_t V - \epsilon \Delta V + (F(x,U)\cdot \nabla_x)V + \lambda (V - (DT)^*U(Tx)) = G(x, U) \text{ in } (0,T)\times\mathbb{R}^d;\\
U(0,x) = U_0(x) \text{ in } \mathbb{R}^d.
\end{cases}\end{equation}
From chapter IV, paragraph 11 in \citep{lady}, the operator $\mathcal{F}$ is continuous and compact. Assume that there exists $µ\in (0,1)$ and $U$ such that $µ\mathcal{F}(U) = U$. Then $U$ solves 
\begin{equation}
\begin{cases}
\partial_t U - \epsilon \Delta U + (µF(x,U)\cdot \nabla_x)U + \lambda(1 - µ)U + µ\lambda (U - (DT)^*U(Tx)) = µG(x, U) \text{ in } (0,T)\times\mathbb{R}^d;\\
U(0,x) = U_0(x) \text{ in } \mathbb{R}^d.
\end{cases}\end{equation}
Hence, from (\ref{est1}) we know that $U$ is bounded by a constant independent of $µ$ in some H\"older space. Thus $\mathcal{F}$ has a fixed point $U_{\epsilon}$ for any $\epsilon > 0$. We now want to show some compactness on the sequence $(U_{\epsilon})_{\epsilon > 0}$ of solutions of $(\ref{regfinitemfg})$. The gradient estimate (\ref{est1}) of course holds and we only need to prove some estimate on the time derivative of the solutions. We first show that for all $\epsilon >0$ there exists $C_1 > 0, \eta > 0$ such that for $t$ sufficiently small, for all $x \in \mathbb{R}^d$
\[
|U_{\epsilon}(t,x) - U_0(x)| \leq C_1t (1 + |x|) + \eta.
\]
We begin by showing the previous inequality component by component. Let us assume that such an inequality does not hold. Then for any $C_1> 0, \eta > 0$, there exists $t_0 >0, x_0 \in \mathbb{R}^d, i \in \{1;...;d\}$ with $t_0$ sufficiently small such that 
\[
\begin{cases}
U^i_{\epsilon}(t_0,x_0) = U^i_0(x) + C_1t_0 (1 + |x_0|) + \eta;\\
\partial_t U^i_{\epsilon}(t_0,x_0) \geq C_1(1 + |x_0|);\\
\nabla_x U^i_{\epsilon}(t_0,x_0) = \nabla_xU^i_0(x_0) + C_1 t_0;\\
-\Delta U^u_{\epsilon}(t_0,x_0) \geq 0.
\end{cases}
\]
Thus evaluating (\ref{regfinitemfg}) at $(t_0,x_0)$ we deduce
\[
C_1 ( 1 + |x_0|) \leq \delta(1 + |x_0| + |U_{\epsilon}(t_0,x_0)|)
\]
for some $\delta > 0$ which depends only on $F,G,\lambda, T$ and the constant $C$ from (\ref{est1}), in particular it does not depend on $\eta$ or $C_1$. From which we easily deduce that there exists $C_1> 0$, such that for all $\eta > 0$, 
\[
|U_{\epsilon}(t,x) - U_0(x)| \leq C_1t (1 + |x|) + \eta.
\]
From which we deduce that it is also true for $\eta = 0$. Therefore we obtain
\[
|\partial_t U_{\epsilon}(0,x)| \leq C_1 (1 + |x|).
\]
From this we deduce that the sequence $(U_{\epsilon})_{\epsilon >0}$ is locally in $[0,\infty)\times \mathbb{R}^d$, uniformly lipschitz in $\epsilon$. From this uniform continuity and Ascoli-Arzela theorem, we know that along a subsequence, $(U_{\epsilon})_{\epsilon > 0}$ converges uniformly toward a locally lipschitz function $U$, solution of (\ref{finitemfg}).
\end{proof}
We now state a regularizing result associated to the structure of the system we are studying. To our knowledge, this is the first result of this type on a master equation.
\begin{Theorem}\label{effetreg}
Let $U$ be a solution of (\ref{finitemfg}). Assume that
\begin{itemize}
\item $U_0$ and $(G,F)$ are monotone.
\item $F$ and $G$ are globally lipschitz.
\item $(G,F)$ is $\alpha$ monotone in its second argument for some $\alpha > 0$.
\item $T$ is affine.
\end{itemize}
Then there exists a constant $C > 0$ (independent of $U_0$) such that for all $t\leq 1$
\begin{equation}\label{est2}
\|D_x U(t)\| \leq \frac{C}{t}.
\end{equation}
\end{Theorem}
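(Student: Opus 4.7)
The plan is to propagate a quantitative monotonicity gain along the characteristic flow of the master equation, exploiting the $\alpha$-monotonicity of $(G,F)$ in its second argument to force the Lipschitz constant of $U(t,\cdot)$ to decay like $1/t$, regardless of the initial Lipschitz constant of $U_0$.

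First, assume $U$ is smooth by working with the parabolic regularization (\ref{regfinitemfg}) of Proposition \ref{existmfg} and passing to the limit $\epsilon \to 0$ at the end. Introduce characteristic curves $X_t$ solving $\dot X_t = F(X_t, U(t, X_t))$, along which $Y_t := U(t, X_t)$ satisfies $\dot Y_t = G(X_t, Y_t) - \lambda(Y_t - (DT)^* U(t, TX_t))$. Starting from two initial points $X_0, X_0'$, set $\Delta X_t = X_t - X_t'$ and $\Delta Y_t = Y_t - Y_t'$, and compute
\[
\frac{d}{dt}\langle \Delta X_t, \Delta Y_t\rangle = \langle F(X,Y) - F(X',Y'), \Delta Y_t\rangle + \langle \Delta X_t, G(X,Y) - G(X',Y')\rangle + \lambda(\cdots).
\]
The joint monotonicity of $(G,F)$ combined with its $\alpha$-monotonicity in the second argument bounds the first two terms below by $\alpha |\Delta Y_t|^2$, while the common-noise contribution is controlled by the same monotonicity-propagation technique as in Proposition \ref{existmfg}, which relies crucially on the affinity of $T$. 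Since $\langle\Delta X_0, \Delta Y_0\rangle \geq 0$ by monotonicity of $U_0$, integration yields
\[
\langle \Delta X_t, \Delta Y_t\rangle \geq \alpha \int_0^t |\Delta Y_s|^2 \, ds.
\]

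To convert this into a $1/t$ bound, combine it with $\langle\Delta X_t, \Delta Y_t\rangle \leq |\Delta X_t||\Delta Y_t|$ and the Lipschitz continuity of $F, G$. A backward Gronwall argument on $[0,t]$ with $t \leq 1$ gives $|\Delta Y_s|^2 \geq \frac12|\Delta Y_t|^2 - C(t-s)(|\Delta X_t|^2 + |\Delta Y_t|^2)$ for $t-s$ sufficiently small, so that $\int_0^t |\Delta Y_s|^2 \, ds \geq c\, t|\Delta Y_t|^2 - C t^2|\Delta X_t|^2$. Rearranging via Young's inequality then yields $|\Delta Y_t| \leq (C/t)|\Delta X_t|$ with $C$ depending only on $\alpha$ and the Lipschitz data, independently of $U_0$. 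Since the characteristic map $X_0 \mapsto X_t$ is a bijection of $\mathbb{R}^d$ by Picard-Lindel\"of, this translates into $\|D_xU(t)\| \leq C/t$ for $t$ below some $t_0 \leq 1$; the estimate for $t \in [t_0,1]$ then follows by restarting the argument, using the bound at $t_0$ as a new Lipschitz initial datum. Passing to the limit $\epsilon \to 0$ completes the proof.

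The main obstacle is the $1/t$ extraction step: the constant $C$ must be truly independent of $\|U_0\|_{\mathrm{Lip}}$, which is why the backward propagation from time $t$ (rather than forward from time $0$) is essential, as it prevents any blow-up of the initial Lipschitz constant from entering the estimate. The Young inequality balancing is also delicate, since a naive application loses a factor of $t$. A secondary technicality is the common-noise contribution, which fits into the same monotonicity-propagation framework precisely because $T$ is affine, allowing the argument of Proposition \ref{existmfg} to be adapted to the doubled system for $(\Delta X_t, \Delta Y_t)$.
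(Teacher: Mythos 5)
Your argument is the Lagrangian twin of the paper's proof. The paper works in Eulerian variables: it sets $W(t,x,\xi)=\langle U(t,x),\xi\rangle$ and propagates positivity of the auxiliary quantity $Z_{\beta,\gamma}=\langle\xi,\nabla_xW\rangle-\beta(t)|\nabla_xW|^2+\gamma(t)|\xi|^2$ with $\beta(t)\sim t$, $\gamma(t)\sim t^2$, via a (non-local) maximum principle; positivity of $Z_{\beta,\gamma}$ is exactly the quadratic inequality $\beta|\nabla_xW|^2/|\xi|^2\le\gamma+|\nabla_xW|/|\xi|$ that yields $\|D_xU(t)\|\le C/\beta(t)\sim C/t$. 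Your doubled pair $(\Delta X_t,\Delta Y_t)$ is the finite-difference version of $(\xi,\nabla_xW)$, your lower bound $\langle\Delta X_t,\Delta Y_t\rangle\ge\alpha\int_0^t|\Delta Y_s|^2\,ds$ followed by the backward Gronwall step $\int_0^t|\Delta Y_s|^2\,ds\ge c\,t|\Delta Y_t|^2-Ct^{2}|\Delta X_t|^2$ reproduces precisely the same quadratic inequality in $|\Delta Y_t|/|\Delta X_t|$ with leading coefficient $\sim t$, and the key structural points coincide: only monotonicity of $U_0$ enters at $t=0$ (the $|\Delta Y|^2$ weight vanishes there), and the constant depends only on $\alpha$ and the Lipschitz data. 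This part of your proof is correct and, if anything, more transparent than the Eulerian computation.

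The one genuine gap is the common-noise term when $\lambda>0$. Along a single pair of characteristics, $\frac{d}{dt}\Delta Y_t$ contains $\lambda(DT)^*\bigl(U(t,TX_t)-U(t,TX_t')\bigr)$, which involves the values of $U$ at the points $TX_t,TX_t'$ --- these do not lie on the two trajectories you are tracking, so the two-point ODE system does not close and you cannot simply ``integrate'' to get $\langle\Delta X_t,\Delta Y_t\rangle\ge\alpha\int_0^t|\Delta Y_s|^2\,ds$. Deferring this to ``the technique of Proposition \ref{existmfg}'' does not resolve it: the fix is to consider the quantity over \emph{all} pairs simultaneously (equivalently, to work with the global function $Z_{\beta,\gamma}(t,x,\xi)$ as the paper does) and to invoke a maximum principle for equations with a non-local zeroth-order term of the form $\lambda\bigl(Z-Z(t,Tx,S\xi)\bigr)$, which is where the affinity of $T$ (so that $TX_t-TX_t'=S\Delta X_t$) and the extra terms $\beta\lambda(|\nabla_xW|^2-|S\nabla_xW|^2)$, $\lambda\gamma(|\xi|^2-|S\xi|^2)$ must be absorbed into the choice of $\beta,\gamma$. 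For $\lambda=0$ your proof is essentially complete; for $\lambda>0$ you need to recast the pairwise estimate as a statement about the infimum over all pairs before Gronwall can be applied.
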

\begin{Rem}
Let us insist on the fact that no assumption is made on the regularity of $U_0= U(0)$.
\end{Rem}
\begin{proof}
This proof starts by following the same argument as in \citep{lions2007cours,bertucci2019some}. We note $T = S + e$ with $S$ a linear map. We take two functions $\beta : \mathbb{R}_+ \to \mathbb{R}_+$ and $\gamma : \mathbb{R}_+ \to \mathbb{R}_+$ to be defined later on and we introduce the auxiliary function $Z_{\beta,\gamma}$ defined by 
\[
Z_{\beta}(t,x,\xi) = \langle\xi, \nabla_x W(t,x,\xi)\rangle - \beta(t) |\nabla_x W(t,x,\xi)|^2 + \gamma(t)|\xi|^2;
\]
where $W$ is defined by
\[
W (t,x,\xi) = \langle U(t,x) , \xi \rangle.
\]
The chain rule yields that $Z_{\beta,\gamma}$ satisfies the PDE :
\begin{equation}
\begin{aligned}
& \partial_t Z_{\beta, \gamma} + \langle F(x,\nabla_{\xi} W), \nabla_x Z_{\beta, \gamma}\rangle + \langle D_p F (x,\nabla_{\xi} W ) \nabla_{\xi}Z_{\beta, \gamma} , \nabla_{x} W\rangle - \langle D_p G(x, \nabla_{\xi}W)\nabla_{\xi}Z_{\beta, \gamma}, \xi\rangle \\
&+  \lambda(Z_{\beta, \gamma} - Z_{\beta, \gamma}(t,Tx, T\xi - e) )\\
& = \langle D_xG(x, \nabla_x W) \xi, \xi\rangle - \langle D_pG(x, \nabla_{\xi}W) \nabla_x W, \xi\rangle - \langle D_x F(x, \nabla_{\xi} W) \nabla_x W, \xi\rangle \\
&+ \langle D_p F(x, \nabla_{\xi}W) \nabla_x W, \nabla_x W\rangle\\
&- 2 \beta \langle D_x G(x, \nabla_x W)\xi, \nabla_x W\rangle + 2 \beta \langle D_x F(x, \nabla_{\xi} W) \nabla_x W, \nabla_x W\rangle\\
& + \beta \lambda \bigg{(} |\nabla_x W|^2 - 2 \langle \nabla_x W(t, Tx, T \xi - e), S \nabla_x W\rangle +  | \nabla_x W(t,Tx, T \xi -e)|^2 \bigg{)} \\
&- \frac{d}{dt} \beta |\nabla_x W|^2  + 2 \gamma(\langle D_p F (x,\nabla_{\xi} W ) \xi , \nabla_{x} W\rangle - \langle D_p G(x, \nabla_{\xi}W)\xi, \xi\rangle)\\
& + \frac{d}{dt}\gamma |\xi|^2 + \lambda \gamma (|\xi|^2 - |T\xi - e|^2).
\end{aligned}
\end{equation}
Using the monotonicity assumption on $(G,F)$ and the lipschitz assumptions, we deduce
\begin{equation}\label{ineqz3}
\begin{aligned}
& \partial_t Z_{\beta, \gamma} + \langle F(x,\nabla_{\xi} W), \nabla_x Z_{\beta, \gamma} \rangle + \langle D_p F (x,\nabla_{\xi} W ) \nabla_{\xi}Z_{\beta, \gamma} , \nabla_{x} W\rangle - \langle D_p G(x, \nabla_{\xi}W)\nabla_{\xi}Z_{\beta, \gamma}, \xi\rangle\\
&+  \lambda(Z_{\beta, \gamma} - Z_{\beta, \gamma}(t,Tx, S\xi ) )\\
& \geq \alpha | \nabla_x W |^2 -  \beta \| D_x G\| \cdot |\xi|^2 - \beta \| D_xG \| \cdot |\nabla_x W|^2 - 2 \beta \| D_x F( \nabla_{\xi} W)\| \cdot |\nabla_x W|^2\\
& + \beta \lambda ( |\nabla_x W|^2 - |S\nabla_x W|^2 ) - \frac{d}{dt} \beta |\nabla_x W|^2 - \gamma (\|D_pF\| - 2\|D_p G\|) |\xi|^2\\
& - \gamma \|D_p F\|\cdot |\nabla_x W |^2 + \frac{d}{dt}\gamma |\xi|^2 + \lambda \gamma (|\xi|^2 - |S\xi |^2).
\end{aligned}
\end{equation}
The right hand side of the previous equation is positive if $\gamma$ and $\beta$ satisfy 
\begin{equation}\label{condbg}
\begin{cases}
\alpha - \beta [\|D_x G\| + 2 \| D_x F\| - \lambda (1 - \|S\|^2)] - \frac{d}{dt}\beta - \gamma \|D_pF\| \geq 0 ;\\
\frac{d}{dt} \gamma + \gamma[ \lambda(1 - \|S\|^2) - \|D_pF\| - 2 \|D_pG\|] - \beta \|D_x G\| \geq 0.
\end{cases}
\end{equation}
Let us now define $\beta$ and $\gamma$ with
\begin{equation}\label{defbg}
\begin{cases}
\beta(t) = \frac{\alpha}{2}t;\\
\gamma(t) = \|D_xG\|\alpha t^2.
\end{cases}
\end{equation}
Let us now observe that there exists $t_f > 0$ such that if $\beta$ and $\gamma$ are defined by (\ref{defbg}), then (\ref{condbg}) holds for all time $t\leq t_f$. Let us now remark that at $t = 0$, $Z_{\beta,\gamma}$ satisfies 
\[
Z_{\beta,\gamma} (t,x,\xi) = \langle \xi, D_x U_0 \xi \rangle \geq 0;
\]
from the monotonicity assumption we made on $U_0$. Thus we deduce from lemma $3$ in appendix of \citep{bertucci2019some}, that $Z_{\beta, \gamma}$ stays positive for all time $t\leq t_f$. From this we deduce that for all $t\leq t_f, \xi \ne 0$ :
\[
\beta(t) \frac{|\nabla_x W|^2}{|\xi|^2} \leq \gamma(t) + \frac{|\nabla_x W|}{|\xi|}.
\]
Hence we obtain
\[
\frac{|\nabla_x W|}{|\xi|} \leq \frac{\sqrt{1 + 4 \beta(t)\gamma(t)}}{\beta(t)};
\]
from which we derive
\[
\|D_xU(t)\| \leq \frac{\sqrt{1 + 4 \beta(t)\gamma(t)}}{\beta(t)}.
\]
The result then easily follows.
\end{proof}

\section{Planning problem master equation in $\mathbb{R}^d$}
\subsection{Statement of the problem}
In this section we show that, under some assumptions, the sequence $(U_{\epsilon})_{\epsilon > 0}$ of solutions of 
\begin{equation}\label{penalpp}
\begin{cases}
\partial_t U + (F(x,U)\cdot \nabla_x)U + \lambda (U - (DT)^*U(Tx)) = G(x, U) \text{ in } (0,T)\times\mathbb{R}^d;\\
U(0,x) = \frac{1}{\epsilon}(x - x_0) \text{ in } \mathbb{R}^d;
\end{cases}
\end{equation}
converges toward a function $U$, which can be interpreted as the value function of a generic player for the mean field planning problem described by $F,G, \lambda$ and $T$ and constrained in $x_0$ at the final time. Before presenting the proof of this result, let us recall briefly the interpretation in terms of modeling of the master equation. In this model, as already mentioned, $U$ represents the value function of the players. In the case $\lambda = 0$, given this value function, the discrete density of players $x$ is assumed to evolve through the term $F$. This means that a particular density starting from $x_1$ at time $t_1$ evolves according to (time has been reversed) :
\begin{equation}
\begin{cases}
\frac{d}{dt}x(t) = F(x(t),U(t,x(t))) \text{ for } 0 \leq t\leq t_1 ;\\ x(t_1) = x_1.
\end{cases}
\end{equation}
The evolution of the value function of the players is given by the function $G$ and the terminal cost is $U(0, \cdot)$, this means that it satisfies
 \begin{equation}
\begin{cases}
\frac{d}{dt}U(t,x(t)) = G(x(t),U(t,x(t))) \text{ for } 0 \leq t\leq t_1 ; \\U(0,x(0)) = \frac{1}{\epsilon}(x(0) - x_0).
\end{cases}
\end{equation}
The main idea of the penalization term is that it induces sufficient incentives so that the final density $x(0)$ shall be close to $x_0$.\\

In the case $\lambda > 0$, we model a situation in which at random times given by a poisson process of intensity $\lambda$, all the population is affected by the transformation $T$, we refer to \citep{bertucci2019some} for more details on this type of noise.

\subsection{Properties of the Yosida approximation}
In this section, we present an argument which explains how we can pass to the limit in (\ref{penalpp}). Namely we show some compactness on the sequence of Yosida approximation of the solution of the penalized problem. Let us mention that the use of the Yosida approximation may seem arbitrary but the definition of solution that we give in the next section does not involve the choice of this approximation. Also let us mention that we could have avoided the use of Yosida approximations and worked only with the regularizing result established in the first section but we believe the present approach is more instructive. 

From proposition \ref{existmfg}, we know that $(U_{\epsilon})_{\epsilon > 0}$ is a well defined sequence of locally lipschitz functions which are monotone in space for all time. For any $\epsilon > 0, \delta > 0$, we define $V_{\epsilon,\delta} = S_{\delta}U_{\epsilon}$, the Yosida regularization of $U_{\epsilon}$ of parameter $\delta > 0$. Equivalently, we could have defined $V_{\delta, \epsilon}$ by $V_{\delta, \epsilon}(t,x) = W_{\epsilon,t}(s,x)|_{s = \delta}$ where $W$ is the solution of
\begin{equation}\label{eqYosida}
\begin{cases}
\partial_s W_{\epsilon,t} + W_{\epsilon,t}\cdot \nabla_x W_{\epsilon,t} = 0;\\
W_{\epsilon,t}(0,x) = U_{\epsilon}(t,x).
\end{cases}
\end{equation}
We present the PDE satisfied by $V_{\delta,\epsilon}$ in the following result.
\begin{Prop}\label{propeqV}
For any $\delta > 0, \epsilon > 0$, $V_{\delta, \epsilon}$ is a solution of 
\begin{equation}\label{eqV}
\begin{cases}
\begin{aligned}
\partial_t V + &F((Id - \delta V), V)\cdot \nabla_x V =\\
 &\big{(} G( (Id - \delta V), V) - \lambda \big{[} V - T^*\circ V \circ (Id - \delta V)^{-1} \circ T \circ (Id - \delta V) \big{]}  \big{)}(I - \delta \nabla_x V); \end{aligned}\\
V(0,x) = \frac{1}{\delta + \epsilon}(x-x_0).
\end{cases}
\end{equation}
\end{Prop}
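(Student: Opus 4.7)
The plan is to unravel the definition of the Yosida approximation. Set $V=V_{\delta,\epsilon}$ and recall that, pointwise in $(t,x)$,
\[
V(t,x)=U_{\epsilon}(t,y),\qquad y=x-\delta V(t,x),
\]
which expresses both the identity $(Id+\delta U_{\epsilon}(t,\cdot))^{-1}=Id-\delta V(t,\cdot)$ and its inverse $(Id-\delta V(t,\cdot))^{-1}=Id+\delta U_{\epsilon}(t,\cdot)$. The smoothness of $U_{\epsilon}$ provided by Proposition~\ref{existmfg} makes all manipulations below rigorous, at least for smooth enough data; the general case follows by regularization.

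Next I would differentiate $V(t,x)=U_{\epsilon}(t,x-\delta V(t,x))$ in $x$ and in $t$. In $x$ this yields $\nabla_{x}V=\nabla_{x}U_{\epsilon}(t,y)(I-\delta\nabla_{x}V)$, which rearranges to $(I+\delta\nabla_{x}U_{\epsilon}(t,y))(I-\delta\nabla_{x}V)=I$ and also to $\nabla_{x}U_{\epsilon}(t,y)=\nabla_{x}V\,(I-\delta\nabla_{x}V)^{-1}$. In $t$ it gives $\partial_{t}V=(I-\delta\nabla_{x}V)\partial_{t}U_{\epsilon}(t,y)$. Now substitute the master equation~(\ref{finitemfg}) satisfied by $U_{\epsilon}$ at the point $(t,y)$:
\[
\partial_{t}U_{\epsilon}(t,y)=G(y,V)-F(y,V)\!\cdot\!\nabla_{x}U_{\epsilon}(t,y)-\lambda V+\lambda (DT)^{*}U_{\epsilon}(t,Ty),
\]
where we used $U_{\epsilon}(t,y)=V(t,x)$.

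The advection term simplifies because $\nabla_{x}V$ and $I-\delta\nabla_{x}V$ are polynomials in the same matrix, hence commute:
\[
(I-\delta\nabla_{x}V)\,\nabla_{x}U_{\epsilon}(t,y)=(I-\delta\nabla_{x}V)\nabla_{x}V(I-\delta\nabla_{x}V)^{-1}=\nabla_{x}V.
\]
This is exactly what collapses the $(I-\delta\nabla_x V)$ on the advection in (\ref{eqV}) and leaves the term $F((Id-\delta V),V)\cdot\nabla_{x}V$ on the left hand side. For the noise, I would express $U_{\epsilon}(t,Ty)$ back in terms of $V$ by inverting $y\mapsto y+\delta U_{\epsilon}(t,y)$: since $y=(Id-\delta V)(x)$, one gets
\[
U_{\epsilon}(t,Ty)=V\bigl(t,(Id-\delta V)^{-1}\!\circ T\circ(Id-\delta V)(x)\bigr),
\]
which, combined with $(DT)^{*}=T^{*}$ (recall $T$ is affine), is exactly the expression appearing on the right hand side of~(\ref{eqV}).

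The initial condition is a direct computation: $U_{\epsilon}(0,\cdot)=\frac{1}{\epsilon}(\cdot-x_{0})$ makes $(Id+\delta U_{\epsilon}(0,\cdot))$ an affine map whose inverse evaluates to $V_{\delta,\epsilon}(0,x)=\frac{1}{\delta+\epsilon}(x-x_{0})$. The only genuinely delicate point is bookkeeping of the composition in the noise term and checking the commutation argument that collapses the advection; beyond that the proof is a clean chain-rule calculation, which is the reason a Yosida-based formulation is convenient here.
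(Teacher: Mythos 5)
Your computation is correct and is exactly the argument the paper compresses into one line (``chain rule together with $Id+\delta U_{\epsilon}=(Id-\delta V_{\delta,\epsilon})^{-1}$''): the differentiation identities, the commutation collapsing the advection factor, the rewriting of the noise term via $(Id-\delta V)^{-1}\circ T\circ(Id-\delta V)$, and the affine inversion for the initial datum all check out. Same approach, just written out in full.
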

\begin{proof}
This claim easily follows from the chain rule together with the fact that $Id + \delta U_{\epsilon} = (Id - \delta V_{\delta, \epsilon})^{-1}$.
\end{proof}
Looking at the the system (\ref{eqV}), it is easy to imagine how we can pass to the limit $\epsilon$ goes to $0$ at the level of the Yosida approximation. We explain this passage to the limit in the next result.
\begin{Prop}\label{convyo}
For any $\delta > 0$, the sequence $(V_{\delta,\epsilon})_{\epsilon > 0}$ converges locally uniformly toward a function $V_{\delta}$, solution of 
\begin{equation}\label{reglim}
\begin{cases}
\begin{aligned}
\partial_t V + &F((Id - \delta V), V)\cdot \nabla_x V =\\
 &\big{(} G( (Id - \delta V), V) - \lambda \big{[} V - T^*\circ V \circ (Id - \delta V)^{-1} \circ T \circ (Id - \delta V) \big{]}  \big{)}(I - \delta \nabla_x V); \end{aligned}\\
V(0,x) = \frac{1}{\delta}(x-x_0).
\end{cases}
\end{equation}
The function $V_{\delta}$ is such that $(Id- \delta V_{\delta}(t))^{-1}$ is well defined for $t> 0$.
\end{Prop}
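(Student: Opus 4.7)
The plan is to exploit the fact that the Yosida regularization tames the $\frac{1}{\epsilon}$-blowup in the initial condition: for any monotone operator $A$ on a Hilbert space, $S_\delta A$ is automatically $\frac{1}{\delta}$-Lipschitz, so $V_{\delta,\epsilon}(t,\cdot)$ inherits a uniform (in $\epsilon$) spatial Lipschitz bound for every $t \geq 0$, including $t = 0$. Combined with the regularizing effect of Theorem~\ref{effetreg} applied directly to $U_\epsilon$ (whose initial condition is $\frac{1}{\epsilon}$-monotone, a fortiori monotone, and for which the constant $C$ in~(\ref{est2}) does not depend on $U_0$), this should provide enough compactness to extract a limit and pass to the limit in~(\ref{eqV}).

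First I would collect the following uniform estimates. Proposition~\ref{existmfg} gives that $U_\epsilon(t,\cdot)$ is monotone for every $t\ge 0$, whence $V_{\delta,\epsilon}(t,\cdot) = S_\delta U_\epsilon(t,\cdot)$ is monotone and $\frac{1}{\delta}$-Lipschitz uniformly in $\epsilon$. The identity $Id - \delta V_{\delta,\epsilon} = (Id + \delta U_\epsilon)^{-1}$ used in Proposition~\ref{propeqV} then gives $(Id - \delta V_{\delta,\epsilon}(t))^{-1} = Id + \delta U_\epsilon(t,\cdot)$, which by Theorem~\ref{effetreg} has Lipschitz constant bounded by $1 + C\delta/t$ for $t \in (0,1]$, locally uniformly in $\epsilon$. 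Plugging these bounds and the Lipschitz assumptions on $F,G,T$ into~(\ref{eqV}), every term of the right-hand side is bounded on every set $[\tau,T]\times K$ with $\tau > 0$ and $K$ compact, uniformly in $\epsilon$. This yields a uniform-in-$\epsilon$ bound on $\partial_t V_{\delta,\epsilon}$ away from $t = 0$; at $t = 0$ the explicit formula $V_{\delta,\epsilon}(0,x) = (x-x_0)/(\delta+\epsilon)$ converges uniformly on compacts to $(x-x_0)/\delta$.

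I would then apply Ascoli-Arzel\`a to extract a subsequence converging locally uniformly on $[0,\infty) \times \mathbb{R}^d$ to a limit $V_\delta$ that is $\frac{1}{\delta}$-Lipschitz in $x$ and satisfies the same Lipschitz bound on $(Id - \delta V_\delta(t))^{-1}$ for $t > 0$ (giving in particular the claimed invertibility). Passing to the limit in~(\ref{eqV}) on $(\tau,T]\times K$ is then standard: $\nabla_x V_{\delta,\epsilon}$ is bounded in $L^\infty$ and converges in the distributional sense, while the nonlocal compositions are continuous under uniform convergence thanks to the uniform Lipschitz bound on $(Id - \delta V)^{-1}$ on $[\tau,T]$. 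Convergence of the full sequence (not just a subsequence) follows by uniqueness for~(\ref{reglim}), itself a consequence of a standard comparison argument based on the monotonicity of $V_\delta$ and of $(G,F)$ and $T$; alternatively one can identify $V_\delta = S_\delta U$ where $U$ is the limit of $U_\epsilon$ on $(0,\infty)\times\mathbb{R}^d$.

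The main difficulty will be the passage to the limit in the nonlocal composition $T^* \circ V \circ (Id - \delta V)^{-1} \circ T \circ (Id - \delta V)$: the Lipschitz bound on $(Id - \delta V_{\delta,\epsilon}(t))^{-1}$ degenerates like $1 + C\delta/t$ as $t \to 0$, so uniform compactness of the composition, as well as continuity of the composition operation with respect to uniform convergence, is only available on sets of the form $[\tau,T] \times K$ with $\tau > 0$. The initial value must therefore be recovered independently from the explicit formula for $V_{\delta,\epsilon}(0,\cdot)$, and continuity in time at $t = 0$ has to be obtained from a separate argument controlling $V_{\delta,\epsilon}(t,\cdot) - V_{\delta,\epsilon}(0,\cdot)$ for small $t$, uniformly in $\epsilon$, by integrating the pointwise bound on $\partial_t V_{\delta,\epsilon}$ which carries the singular factor $1/t$.
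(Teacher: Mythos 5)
Your overall strategy is the one the paper follows: the uniform $\frac{1}{\delta}$-Lipschitz bound coming from the Yosida regularization of a monotone map, the identity $(Id-\delta V_{\delta,\epsilon}(t))^{-1}=Id+\delta U_\epsilon(t)$ combined with Theorem \ref{effetreg} to control the inverse by $Ct^{-1}$ uniformly in $\epsilon$, boundedness of the right-hand side of (\ref{eqV}) to get equicontinuity in time, and Ascoli--Arzel\`a plus stability of the equation under locally uniform convergence. The case $\lambda=0$ is indeed immediate from the gradient bound alone, as in the paper.

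There is, however, a genuine gap at the one delicate point, namely the behaviour near $t=0$ when $\lambda>0$. Your plan is to obtain equicontinuity only on $[\tau,T]\times K$ and then recover continuity at $t=0$ ``by integrating the pointwise bound on $\partial_t V_{\delta,\epsilon}$ which carries the singular factor $1/t$''. But $\int_0^t s^{-1}\,ds$ diverges, so a bound of order $C/t$ on the time derivative gives no modulus of continuity at $t=0$ uniform in $\epsilon$; with only this you can extract a limit on $(0,\infty)\times\mathbb{R}^d$ but you cannot assert that it attains the initial datum $\frac{1}{\delta}(x-x_0)$, which is part of the statement and is precisely what the later uniqueness argument uses. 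The missing ingredient, which is what the paper supplies, is a second estimate on the \emph{forward} map: $Id-\delta V_{\delta,\epsilon}(t)=(Id+\delta U_\epsilon(t))^{-1}$ is not merely nonexpansive but a strong contraction for small $t$, with Lipschitz constant of order $\epsilon(\epsilon+\delta)^{-1}+t$ (at $t=0$ it equals $\epsilon(\epsilon+\delta)^{-1}$ because $U_\epsilon(0)=\epsilon^{-1}(Id-x_0)$ is $\epsilon^{-1}$-monotone, and this strong monotonicity persists for a short time). In the composition $(Id-\delta V)^{-1}\circ T\circ(Id-\delta V)$ this small constant compensates the $Ct^{-1}$ of the inverse, so the composition is uniformly Lipschitz and the right-hand side of (\ref{eqV}) stays bounded uniformly near $(t,x)=(0,x_0)$; equicontinuity then holds on all of $[0,T]\times K$ and the initial condition passes to the limit. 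Your closing remark that convergence of the full sequence (rather than a subsequence) should follow from uniqueness for (\ref{reglim}), or from the identification $V_\delta=S_\delta U$, is a reasonable completion of a point the paper leaves at the level of subsequences.
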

\begin{proof}
First, let us note that from basic properties of the Yosida approximation, the following holds for any $\delta >0, \epsilon > 0$ and $t> 0$:
\[
\|D_x V_{\delta,\epsilon}\| \leq \frac{1}{\delta}.
\]
In the case $\lambda = 0$, this estimate is enough to gain compactness on the sequence $(V_{\delta,\epsilon})_{\epsilon > 0}$ using equation (\ref{eqV}).

Let us now turn to the case $\lambda > 0$. From theorem \ref{effetreg}, we know that $(Id - \delta V_{\delta,\epsilon}(t))^{-1} = Id + \delta U_{\epsilon}(t)$ is $Ct^{-1}$ lipschitz for some constant $C>0$ and time $t\leq t_f$ where $t_f$ is independent of $\delta$ and $\epsilon$. On the other hand, it is easy to check that $Id - \delta V_{\delta, \epsilon}(t)$ is $C \epsilon (\epsilon + \delta)^{-1} t$ lipschitz for time $t\leq t_f$ where $t_f$ does not depend on $\epsilon$. Furthermore, the right hand side of (\ref{eqV}) stays bounded uniformly near $(t,x) = (0,x_0)$. Thus from the fact that $V_{\delta, \epsilon}$ satisfies (\ref{eqV}), we deduce compactness on the sequence $(V_{\delta,\epsilon})_{\epsilon > 0}$. Extracting a subsequence if necessary, we note $V_{\delta}$ the limit of $(V_{\delta,\epsilon})_{\epsilon > 0}$. The fact that $((Id - \delta V_{\delta, \epsilon}(t))^{-1})_{\epsilon> 0}$ converges toward $(Id - \delta V_{\delta}(t))^{-1}$ for $t>0$ is a simple exercise that we leave to the interested reader.

\end{proof}
\subsection{The limit master equation}
The previous result characterizes the behavior of the Yosida approximation of the solution of the master equation, in particular it gives information on the behavior of the solution near $\{t = 0\}$. This leads us to the following definition, that we comment below.
\begin{Def}\label{defrd}
A solution of the master equation associated to the planning problem characterized by $F,G,\lambda,T,x_0$ is a function $U : (0,T)\times \mathbb{R}^d\to \mathbb{R}^d$ solution of 
\begin{equation}\label{ppme}
\partial_t U + (F(x,U)\cdot \nabla_x)U + \lambda (U - (DT)^*U(Tx)) = G(x, U) \text{ in } (0,\infty)\times\mathbb{R}^d;
\end{equation}
such that $(U(t))_{t \geq 0}$ converges toward $A_{x_0}$ in the sense of graphs as $t$ tends to $0$.
\end{Def}
Let us recall that a sequence of multivalued operator $(A_n)_{n \in \mathbb{N}}$ converges toward a multivalued operator $A$ in the sense of graph if for any sequence $(x_n,y_n)_{n \in \mathbb{N}}$ which converges toward $(x,y) \in H^2$ such that $y_n \in A(x_n)$ for all $n \geq 0$, the property $y \in A(x)$ holds.
We do not comment on the fact that $U$ is a solution of (\ref{ppme}) in $(0,\infty)\times\mathbb{R}^d$. Let us note that the initial condition we impose is rather weak, but, as we shall see in theorem \ref{unique}, it is sufficient to establish uniqueness. In some sense, this condition is strong enough to capture the idea that trajectories of finite cost induced by the solution of the problem necessary start from $x_0$. Moreover let us note that the convergence in the sense of graphs is natural to be expected as the sequence of initial conditions in the penalized problem converges in the sense of graphs toward $A_{x_0}$.

We now present a result of existence and one of uniqueness for such solutions.
\begin{Theorem}\label{exist}
Under the assumptions of theorem \ref{effetreg}, there exists a solution $U$ of the planning problem master equation in the sense of definition \ref{defrd}. 
\end{Theorem}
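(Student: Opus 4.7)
The plan is to pass to the limit $\delta\to 0$ in the family $(V_\delta)_{\delta>0}$ produced by Proposition \ref{convyo}. The limit $U$ will solve the master equation (\ref{ppme}) on $(0,\infty)\times\mathbb{R}^d$, and the explicit form $V_\delta(0,x)=(x-x_0)/\delta$ will be what gives $A_{x_0}$ as the initial condition in the sense of graphs. The argument has three stages: uniform-in-$\delta$ estimates for positive times, extraction of a subsequential limit satisfying (\ref{ppme}), and the verification of the initial condition.

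For the estimates, the decisive input is the regularizing effect of Theorem \ref{effetreg}, which gives $\|D_x U_\epsilon(t)\|\le C/t$ with $C$ independent of $\epsilon$ and of the (unbounded) initial datum. Writing $V_{\delta,\epsilon}=U_\epsilon\circ(Id+\delta U_\epsilon)^{-1}$ and differentiating yields
$$D_x V_{\delta,\epsilon}=D_x U_\epsilon\cdot(I+\delta D_x U_\epsilon)^{-1}.$$
Monotonicity of $U_\epsilon$ makes the symmetric part of $D_x U_\epsilon$ positive semidefinite, and a direct computation shows $\|(I+\delta D_x U_\epsilon)v\|\ge\|v\|$, hence $\|(I+\delta D_x U_\epsilon)^{-1}\|\le 1$. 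Therefore $\|D_x V_{\delta,\epsilon}(t)\|\le\|D_x U_\epsilon(t)\|\le C/t$ uniformly in both $\delta$ and $\epsilon$, and passing $\epsilon\to 0$ as in Proposition \ref{convyo} preserves this as $\|D_x V_\delta(t)\|\le C/t$. A local uniform-in-$\delta$ $L^\infty$ bound on $V_\delta$, anchored at $V_\delta(0,x_0)=0$, follows from a Gr\"onwall-type argument along $t\mapsto V_\delta(t,x_0)$ using the PDE (\ref{reglim}) and the global Lipschitz character of $F,G$.

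These spatial bounds, combined with the PDE control on $\partial_t V_\delta$, yield via Ascoli-Arzela a locally uniformly convergent subsequence $V_{\delta_n}\to U$ on $(0,T)\times\mathbb{R}^d$. As $\delta_n\to 0$, the maps $Id-\delta_n V_{\delta_n}$, $(Id-\delta_n V_{\delta_n})^{-1}$, and $I-\delta_n\nabla_x V_{\delta_n}$ converge respectively to $Id$, $Id$, and $I$ uniformly on compact subsets of $(0,T)\times\mathbb{R}^d$, so the nonlocal Yosida term $T^*\circ V_{\delta_n}\circ(Id-\delta_n V_{\delta_n})^{-1}\circ T\circ(Id-\delta_n V_{\delta_n})$ converges to $(DT)^*U(Tx)$; the affine structure of $T$ is essential here. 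Passing to the limit in (\ref{reglim}) shows that $U$ satisfies (\ref{ppme}) on $(0,\infty)\times\mathbb{R}^d$.

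The main obstacle is the third step: verifying $U(t)\to A_{x_0}$ in the sense of graphs as $t\to 0$. Given a sequence $(t_n,x_n)\to(0,\bar{x})$ with $U(t_n,x_n)\to\bar{y}$, one must show $\bar{x}=x_0$. The strategy is a diagonal argument exploiting $V_\delta(0,x)=(x-x_0)/\delta$: choose $\delta_n\to 0$ slowly (typically $\delta_n^2\gg t_n$) and such that $|V_{\delta_n}(t_n,x_n)-U(t_n,x_n)|\to 0$, which is possible since for each fixed $(t_n,x_n)\in(0,T)\times\mathbb{R}^d$ one has $V_\delta(t_n,x_n)\to U(t_n,x_n)$ as $\delta\to 0$. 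Integrating the PDE (\ref{reglim}) along $s\mapsto V_{\delta_n}(s,x_n)$ from $s=0$ to $s=t_n$, and using that at $s=0$ the factor $(I-\delta\nabla_x V_\delta)$ vanishes so that $\partial_t V_\delta(0,x_n)=-F(x_0,(x_n-x_0)/\delta_n)/\delta_n$, one obtains $V_{\delta_n}(t_n,x_n)=(x_n-x_0)/\delta_n+O(t_n/\delta_n+t_n|x_n-x_0|/\delta_n^2)$. The boundedness of $V_{\delta_n}(t_n,x_n)$ then forces $(x_n-x_0)/\delta_n$ to remain bounded, and hence $x_n\to x_0$. The technical difficulty is the simultaneous reconciliation of the two competing requirements on $\delta_n$: pointwise convergence $V_{\delta_n}\to U$ at the non-compact sequence $(t_n,x_n)$ pushes $\delta_n$ to be small, while the time-integral estimate requires $\delta_n$ not too small compared to $\sqrt{t_n}$.
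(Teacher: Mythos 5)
Your proposal diverges from the paper at the very first step, and the divergence is what creates the gap. The paper never sends $\delta\to 0$: it fixes one $\delta>0$, takes the limit $V=\lim_\epsilon V_{\delta,\epsilon}$ from Proposition \ref{convyo}, and \emph{recovers} $U$ by inverting the Yosida map, $U(t,x)=V\bigl(t,(Id-\delta V(t,\cdot))^{-1}(x)\bigr)$, using the identity $Id+\delta U=(Id-\delta V)^{-1}$. With this definition the PDE (\ref{ppme}) follows from the chain rule, and the graph convergence to $A_{x_0}$ is an elementary sequence manipulation: if $y_n=U(t_n,x_n)$ converges, set $z_n=(Id-\delta V(t_n,\cdot))^{-1}(x_n)$, note $V(t_n,z_n)=y_n$, hence $z_n=x_n+\delta y_n$ converges, and the known behavior of $V$ at $t=0$ (namely $V(0,\cdot)=\frac{1}{\delta}(\cdot-x_0)$) forces $x_n=z_n-\delta y_n\to x_0$. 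There are no competing limits anywhere.

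By insisting on $\delta\to 0$ you import two genuine difficulties. First, your locally uniform-in-$\delta$ $L^\infty$ bound on $V_\delta$ does not follow from a Gr\"onwall argument anchored at $V_\delta(0,x_0)=0$: the transport term in (\ref{reglim}) involves $\nabla_x V_\delta$, which you have only bounded by $\min(\delta^{-1},C/t)$, and $\int_0^b\min(\delta^{-1},C/t)\,dt\sim C\log(1/\delta)\to\infty$, so the Gr\"onwall constant degenerates as $\delta\to 0$. A $\delta$-uniform interior bound is true but requires a separate argument (e.g.\ comparison with the monotone characteristic system emanating from $x_0$), which you do not supply. Second, and more seriously, your verification of the initial condition rests on a diagonal choice of $\delta_n$ that must simultaneously satisfy $\delta_n^2\gg t_n$ (for your time-integral expansion of $V_{\delta_n}(\cdot,x_n)$ near $s=0$) and be small enough that $V_{\delta_n}(t_n,x_n)\approx U(t_n,x_n)$ — but the latter convergence is only locally uniform on compacts of $(0,T)\times\mathbb{R}^d$, and $(t_n,x_n)$ escapes to $\{t=0\}$, so nothing guarantees the two requirements are compatible. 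You acknowledge this yourself; it is not a removable technicality but the crux of the theorem, since the graph convergence is the only nonstandard part of the statement. The missing idea is precisely that the singular initial layer should be handled at fixed $\delta$, where $V_\delta$ is Lipschitz and continuous up to $t=0$, and only then transferred to $U$ through the (explicit, bijective for $t>0$) change of variable $x\mapsto x+\delta U(t,x)$.
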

\begin{proof}
We consider the sequence $(U_{\epsilon})_{\epsilon > 0}$ of solutions of (\ref{penalpp}) and for some $\delta> 0$, $(V_{\delta, \epsilon})_{\epsilon > 0}$ the corresponding Yosida approximations. Thanks to proposition \ref{convyo}, extracting a subsequence if necessary, $(V_{\delta, \epsilon})_{\epsilon > 0}$ converges toward a function $V$ such as in proposition \ref{convyo}. We define the function $U$ by 
\begin{equation}\label{definv}
U(t,x) = V(t,(Id - \delta V(t,\cdot))^{-1}(x)); t> 0; x \in \mathbb{R}^d.
\end{equation}
The fact that $U$ solves the PDE for $t>0$ is a simple consequence of the chain rule. Let us now analyse the behavior of $U$ near $t= 0$. Let us take a real sequence $(t_n)_{n \geq 0}$ converging toward $0$ and two converging sequence of $\mathbb{R}^d$ $(x_n)_{n \geq 0}$ and $(y_n)_{n \geq 0}$ such that for all $n \geq 0$ $y_n = U(t_n, x_n)$. Let us define for all $n \geq 0$, $z_n = (Id - \delta V(t_n, \cdot))^{-1}(x_n)$. By definition of $U$, we deduce that $(V(t_n,z_n))_{n \geq 0} = (y_n)_{n \geq 0}$ and thus that it is a converging sequence. Hence, $(z_n)_{n \geq 0} = (x_n + \delta V(t_n,z_n))_{n \geq 0}$ is also a converging sequence. From the behavior of $V$ near $t = 0$, we deduce finally that $(x_n)_{n \geq 0}$ converges toward $x_0$. Thus $(U(t))_{t \geq 0}$ converges toward $A_{x_0}$ in the sense of graphs as $t$ tends to $0$.
\end{proof}

\begin{Theorem}\label{unique}
Under the assumptions of theorem \ref{effetreg}, there is a unique $U$ solution of the planning problem master equation in the sense of definition \ref{defrd}. Moreover, $U$ is the limit of the sequence $(U_{\epsilon})_{\epsilon > 0}$ of solutions of (\ref{penalpp}).
\end{Theorem}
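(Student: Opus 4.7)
\noindent\textit{Proof plan.} The strategy is to pass from the singular problem of Definition \ref{defrd} to a regular one via Yosida regularization, use uniqueness at the regularized level, and then invert. Take any solution $U$ in the sense of Definition \ref{defrd}. Since monotonicity of $U(t,\cdot)$ in $x$ is propagated by (\ref{ppme}) under the standing assumptions (the same computation underlying Proposition \ref{existmfg} and Theorem \ref{effetreg}), for each $\delta > 0$ the Yosida regularization $V_\delta(t) := S_\delta U(t)$ is well defined, $\tfrac{1}{\delta}$-lipschitz, monotone, and, by Proposition \ref{propeqV}, solves (\ref{reglim}) on $(0,\infty)\times\mathbb{R}^d$.

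The first substantive step is the identification of the trace of $V_\delta$ at $t = 0$: the claim is that $V_\delta(t,x) \to \frac{1}{\delta}(x - x_0)$ locally uniformly in $x$. Indeed, for fixed $x$ and any $t_n \to 0$, the uniform lipschitz bound on $V_\delta$ extracts a subsequence along which $V_\delta(t_n, x) \to y$; setting $z_n := x - \delta V_\delta(t_n, x)$, one has $z_n \to x - \delta y$ while $U(t_n, z_n) = V_\delta(t_n, x) \to y$ along the subsequence. The graph convergence $U(t) \to A_{x_0}$ of Definition \ref{defrd} then forces $x - \delta y = x_0$, hence $y = \frac{1}{\delta}(x - x_0)$. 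Since every subsequential limit equals this value, the full convergence follows, and it is locally uniform by Arzel\`a--Ascoli combined with the equilipschitz bound.

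Uniqueness then reduces to a uniqueness statement for (\ref{reglim}) with the lipschitz monotone initial datum $x \mapsto \frac{1}{\delta}(x - x_0)$. This is a quasilinear transport equation whose coefficients are lipschitz on the class of $\tfrac{1}{\delta}$-lipschitz monotone solutions (the compositions $(Id - \delta V)^{\pm 1}$ and $T$ are lipschitz in $V$ on this class, using that $T$ is affine). Uniqueness follows from the classical MFG doubling-of-variables/monotonicity argument along characteristics underlying Proposition \ref{existmfg} and used in \citep{lions2007cours,bertucci2019some}, adapted to account for the composition terms in the right-hand side of (\ref{reglim}) by a Gronwall-type estimate on $\|V^1_\delta(t) - V^2_\delta(t)\|$. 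Given two solutions $U^1, U^2$ of the singular problem, one thus obtains $V^1_\delta = V^2_\delta$ for every $\delta > 0$, and the inversion formula (\ref{definv}) yields $U^1 = U^2$.

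Finally, the convergence $U_\epsilon \to U$ follows from Theorem \ref{exist}: every subsequential limit of $(U_\epsilon)_{\epsilon > 0}$ is a solution in the sense of Definition \ref{defrd}, and the uniqueness just proved promotes the subsequential convergence into a full one. I expect the main obstacle to be the uniqueness step for (\ref{reglim}): conceptually it is the standard MFG monotonicity argument, but the right-hand side produced by the Yosida transformation, in particular the common-noise term $V - T^* \circ V \circ (Id - \delta V)^{-1} \circ T \circ (Id - \delta V)$, requires a careful tracking of how compositions with $(Id - \delta V)^{\pm 1}$ depend on $V$ so as not to spoil the monotonicity estimate.
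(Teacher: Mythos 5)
Your overall strategy --- transfer the problem to the Yosida level, prove uniqueness for (\ref{reglim}) with the Lipschitz initial datum $\frac{1}{\delta}(x-x_0)$, and invert --- is a natural route, but it has a genuine gap at its central step, which you yourself flag: the uniqueness for (\ref{reglim}) is asserted, not proved. Equation (\ref{reglim}) is not of the standard master-equation form to which ``the classical MFG doubling-of-variables/monotonicity argument'' applies: its right-hand side depends on $\nabla_x V$ through the factor $(I-\delta \nabla_x V)$ and on $V$ nonlocally through the compositions $(Id-\delta V)^{\pm 1}$, so neither the monotonicity structure of $(G,F)$ nor a Gronwall bound on $\|V^1_\delta(t)-V^2_\delta(t)\|_\infty$ survives the transformation in any obvious way (a sup-norm Gronwall estimate is in any case delicate for a quasilinear transport equation whose coefficients depend on the gradient of the solution). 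So the heart of the theorem is left open. There are two smaller gaps upstream: (i) you need $U(t,\cdot)$ to be monotone for $t>0$ before $S_\delta U(t)$ makes sense, but Definition \ref{defrd} does not assume this and there is no monotone initial datum from which to ``propagate'' it --- it has to be extracted from the graph convergence at $t=0$ by exactly the kind of maximum-principle argument your plan tries to bypass; (ii) in the trace identification, the $1/\delta$-Lipschitz bound in $x$ gives equicontinuity but not boundedness of $V_\delta(t_n,x)$ as $t_n\to 0$, and graph convergence in the ``closedness'' sense used in the paper says nothing about sequences of graph points that escape to infinity, so the extraction of a convergent subsequence needs an additional anchor.

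The paper's proof sidesteps all of this by doubling variables directly on the singular solutions: for two solutions $U,V$ of (\ref{ppme}) it sets $W(t,x,y)=\langle U(t,x)-V(t,y),\,x-y\rangle$, which, by the monotonicity of $(G,F)$, satisfies a linear transport inequality of the form $\partial_t W+F(x,U)\cdot\nabla_xW+F(y,V)\cdot\nabla_yW+\lambda\bigl(W-W(t,Tx,Ty)\bigr)\geq 0$. The graph convergence of Definition \ref{defrd} is used only to obtain $\liminf_{t\to 0}W(t,x,y)\geq 0$, and the comparison principle of \citep{bertucci2019some} then gives $W\geq 0$ everywhere, hence $U=V$; the convergence of $(U_\epsilon)$ then follows from Theorem \ref{exist} exactly as you say. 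The lesson is that the singular initial condition is far easier to exploit through the sign of $W$ at $t=0^+$ than through a pointwise trace of a regularization: your detour through (\ref{reglim}) relocates the difficulty without resolving it, and to close it you would still need essentially the paper's monotonicity--maximum-principle argument, at which point the direct proof is shorter.
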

\begin{proof}
Denote by $U$ and $V$ two solutions of the problem. The main argument of this proof consists in showing that the real function $W$ defined for all $t>0$, $x,y \in \mathbb{R}^d$ by 
\begin{equation}
W(t,x,y) = \langle U(t,x) - V(t,y), x- y\rangle
\end{equation}
is positive on $(0,\infty)\times \mathbb{R}^{2d}$. This function satisfies
\begin{equation}
\partial_t W + F(x,U)\cdot \nabla_x W + F(y,V)\cdot \nabla_y W + \lambda (W- W(t,Tx,T\xi)) \geq 0 \text{ on } (0,\infty)\times \mathbb{R}^{2d}.
\end{equation}
Let us note that from the convergence of $V$ and $U$ toward $A_{x_0}$ as $t$ tends to $0$, we deduce that for all $x,y\in \mathbb{R}^d$, 
\begin{equation}
\liminf_{t \to 0} W(t,x,y) \geq 0.
\end{equation}
Thus we obtain from the maximum principle result (see lemma $3$ in appendix of \citep{bertucci2019some}) that $W \geq 0$ everywhere. This yields that $U = V$. The proof of the previous result guarantees that this solution is the limit of the solutions of the penalized problems.

\end{proof}

\subsection{Links with the induced trajectories}
In this section, we indicate why the trajectories induced by the solution $U$ of the master equation of the planning problem converge toward the constrained point $x_0$. This convergence, explained in the next result, is a consequence of the behavior of $U$ near $t = 0$ and of the monotonicity of $F$ and $G$. We focus on the deterministic case (i.e. $\lambda = 0$) to avoid some technicalities which are due to the particular choice of noise we made, however the same type of approach can be developed in the stochastic case.

\begin{Prop}
Let $U$ be a solution of the problem in the sense of definition \ref{defrd} and assume that $F$ and $G$ are globally lipschitz and that $\lambda = 0$, then for any $x_1 \in \mathbb{R}^d$ and $t_1 > 0$, the trajectory $(x(t))_{0 < t \leq t_1}$ defined (backwardly) by
\begin{equation}\label{traj}
\begin{cases}
\frac{d}{dt}x(t) =  F(x(t),U(t,x(t)));\\ x(t_1) = x_1;
\end{cases}
\end{equation}
is such that $x(t)$ converges toward $x_0$ as $t$ goes to $0$.
\end{Prop}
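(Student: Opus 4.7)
\emph{Plan.} The strategy is to augment the trajectory $x(t)$ with the companion variable $u(t):=U(t,x(t))$ and to observe that the pair $(x,u)$ solves an autonomous Lipschitz system which extends continuously down to $t=0$ for free; the graph convergence built into definition \ref{defrd} will then force the limit of $x(t)$ to be $x_{0}$.

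First I would fix $t_{1}>0$ and $x_{1}$, set $u_{1}:=U(t_{1},x_{1})$, and note that by theorem \ref{effetreg} the map $U(t,\cdot)$ is locally Lipschitz on $(0,t_{1}]$ (with constant $C/t$), so the ODE (\ref{traj}) is classically well posed on any $[t_{0},t_{1}]$ with $t_{0}>0$. Setting $u(t):=U(t,x(t))$ and differentiating using the master equation (\ref{ppme}) with $\lambda=0$, the chain rule gives
\[
\dot u(t)=\partial_{t}U(t,x(t))+F(x(t),u(t))\cdot\nabla_{x}U(t,x(t))=G(x(t),u(t)),
\]
so that $(x,u)$ is a solution, on $(0,t_{1}]$, of the coupled autonomous system $\dot x=F(x,u)$, $\dot u=G(x,u)$ with terminal datum $(x_{1},u_{1})$ at $t=t_{1}$.

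Next, since $F$ and $G$ are globally Lipschitz, Cauchy--Lipschitz produces a unique global-in-time solution of this coupled system; in particular $(x(t),u(t))$ extends continuously to $t=0$ with some limit $(\bar x,\bar u)\in\mathbb{R}^{2d}$. Taking any sequence $t_{n}\to 0^{+}$, one has $(x(t_{n}),u(t_{n}))\to(\bar x,\bar u)$ together with $u(t_{n})=U(t_{n},x(t_{n}))$. By hypothesis $(U(t))_{t\geq 0}$ converges to $A_{x_{0}}$ in the sense of graphs, and the graph of $A_{x_{0}}$ is exactly $\{x_{0}\}\times\mathbb{R}^{d}$, so $(\bar x,\bar u)$ must lie on this vertical line; consequently $\bar x=x_{0}$, i.e. $x(t)\to x_{0}$ as $t\to 0^{+}$.

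The main subtlety is the extension of the trajectory up to $t=0$: $U$ is genuinely singular there (its graph degenerates to the vertical fibre over $x_{0}$), so the ODE (\ref{traj}) cannot be handled directly near $t=0$. The key trick is that the augmented unknown $u(t)=U(t,x(t))$ solves a \emph{regular} Lipschitz ODE driven by $x$, which removes the singularity: the pair $(x,u)$ lives in a nice autonomous system, hence stays bounded and admits a continuous limit at $t=0$. Monotonicity of $(G,F)$ enters only implicitly, through theorem \ref{effetreg} and definition \ref{defrd}, which together guarantee the classical character of $U$ on $\{t>0\}$ and the graph convergence at $t=0$ used in the last step.
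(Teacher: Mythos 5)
Your proof is correct and follows essentially the same route as the paper: both augment the trajectory with $u(t)=U(t,x(t))$, observe that $(x,u)$ solves the autonomous globally Lipschitz system $\dot x=F(x,u)$, $\dot u=G(x,u)$ (so it stays bounded and extends to $t=0$), and then invoke the graph convergence of $U(t)$ to $A_{x_0}$ to force the limit of $x(t)$ to be $x_0$. (You even correct a typo in the paper's displayed equation, where $\frac{d}{dt}U(t,x(t))$ should equal $G(x(t),U(t,x(t)))$, not $F$.)
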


\begin{proof}
Let us remark that because of the assumption we made, (\ref{traj}) defines a continuous path $(x(t))_{0 < t \leq t_1}$ in $\mathbb{R}^d$. 
Note that the following holds :
\begin{equation}
\begin{cases}
\frac{d}{dt}U(t,x(t)) =  F(x(t),U(t,x(t)));\\ U(t_1,x(t_1)) = U(t_1,x_1).
\end{cases}
\end{equation}
Thus $(x(t),U(t,x(t)))_{0< t\leq t_1}$ is the unique solution of an ordinary differential equation of the form $\dot{y} = f(y)$ for some lipschitz function $f$. Hence, we deduce that $(x(t),U(t,x(t)))$ is uniformly bounded for $0<t\leq t_1$. From the convergence of $U(t)$ toward $A_{x_0}$ as $t$ tends to $0$, in the sense of graphs, we deduce that 
\begin{equation}
x(t) \to_{t \to 0} x_0.
\end{equation}

\end{proof}

\section{A comment on the case of a restricted domain}
In this section, we briefly discuss how the idea developed in the previous section can be extended to situations involving bounded domains. We indicate an example in which the behavior of the solution of the planing problem is clear but we do not present a thorough study of all the different possible structures. Many different behaviors can be expected and more cases shall be treated in a future work.

\subsection{Main differences with the previous case}
The typical form of a master equation in a domain $\Omega \subset \mathbb{R}^d$ is
\begin{equation}\label{mfgdom}
\partial_t U  + (F(x,U)\cdot \nabla_x)U + \lambda (U - (DT)^*U(Tx)) = G(x, U) \text{ in } (0,T)\times{\Omega};
\end{equation}
without boundary conditions on the boundary of the domain $\partial \Omega$. We refer to \citep{bertucci2020} for precise results on such equations, in particular in the MFG setting the fact that this equation behaves nicely is a rather simple extension of similar results in the whole space.

Let us mention that, when $\partial \Omega$ is smooth, a natural (necessary) condition for (\ref{mfgdom}) to be well-posed when equipped with an initial condition is 
\begin{equation}\label{cond}
\langle F(x, p),n(x)\rangle \leq 0 ; x \in \partial \Omega ; p \in \mathbb{R}^d;
\end{equation}
where $n(x)$ is the unit normal vector to $\partial \Omega$ at $x$. The relation (\ref{cond}) is of course an obstacle to some uniform monotonicity assumption on $F$ in its second variable. 
\subsection{The half space case with vanishing conditions}
In this section, we investigate a the case in which : $\Omega = \{x_1 > 0 \}$ and $F_1$, the first component of $F$ vanishes linearly near $0$. More precisely, we assume that for $x_1 \leq 1$, the following holds
\begin{equation}\label{hyp}
F_1(x,p) = x_1\tilde{F_1}(x,p);
\end{equation}
where $\tilde{F_1}$ is such that $\tilde{F} = (\tilde{F_1},F_2,..., F_d)$ satisfies the assumption of the first part.

For the sake of simplicity, we work in the case $T(x) = (x_1, T'(x_2,..,x_d))$ with $T' \in \mathcal{L}(\mathbb{R}^{d-1},\mathbb{R}^{d-1})$.

To sum up, the problem we are interested in is finding a function $U: (0,\infty)\times \Omega \to \mathbb{R}^d$ such that $U$ is a solution of 
\begin{equation}
\partial_t U  + (F(x,U)\cdot \nabla_x)U + \lambda (U - (DT)^*U(Tx)) = G(x, U) \text{ in } (0,T)\times \{x_1 > 0\};
\end{equation}
and $(U(t))_{t > 0}$ converges toward $A_{x_0}|_{\Omega}$ in the sense of graphs, under the assumption (\ref{hyp}).

Now, let us note that the function $V:(0,\infty)\times \mathbb{R}^d \to \mathbb{R}^d$ defined by 
\begin{equation}\label{defV}
V(t,y_1,y_2,...,y_d) = \begin{cases} U(t,e^{y_1-1}, y_2,...,y_d) \text{ if } y_1 < 1;\\ U(t,y) \text{ else;}\end{cases}
\end{equation}
satisfies a master equation of the type presented in section 2. Thus everything we did above applies immediately. Let us mention that in this case, we obtain form (\ref{defV}) and the behavior of $V$ that the solution $U$ of the master equation in the half space is unbounded near $\{x_1= 0\}$, even for $t> 0$. More precisely, it behaves as $ln(x_1)$ near $\{x_1 = 0\}$, which shows that Lipschitz estimates may not be true in the case of a boundary. Indeed they rely mainly on the monotone assumption on $F$.

\section*{Acknowledgments}
The second and third authors have been partially supported by the Chair FDD (Institut Louis Bachelier). The third author has been partially supported by the Air Force Office for Scientific Research grant FA9550-18-1-0494 and the Office for Naval Research grant N000141712095.

\bibliographystyle{plainnat}
\bibliography{bibremarks}

\end{document}